\documentclass{amsart}
 \usepackage{amssymb,amsmath,amsfonts,epsfig,latexsym}

 \newtheorem{theorem}{Theorem}[section]
\newtheorem{definition}[theorem]{Definition}
\newtheorem{proposition}[theorem]{Proposition}
\newtheorem{lemma}[theorem]{Lemma}
\newtheorem{corollary}[theorem]{Corollary}

\newtheorem*{KL}{Kedlaya's Lemma}

\theoremstyle{definition}
\newtheorem{remark}[theorem]{Remark}
\newtheorem{example}[theorem]{Example}

\def\Z{\ensuremath{\mathbb{Z}}}
\def\Q{\ensuremath{\mathbb{Q}}}

\def\R{\ensuremath{\mathbb{R}}}
\def\F{\ensuremath{\mathbb{F}}}

\def\m{\ensuremath{\mathfrak{m}}}

\def\<{\ensuremath{\langle}}
\def\>{\ensuremath{\rangle}}

\DeclareMathOperator{\ev}{ev}

\DeclareMathOperator{\Hom}{Hom}
\DeclareMathOperator{\init}{in}

\DeclareMathOperator{\Spec}{Spec}

\DeclareMathOperator{\supp}{supp}

\DeclareMathOperator{\Trop}{Trop}
\DeclareMathOperator{\TTrop}{\mathfrak{Trop}}

\begin{document}

\title{Fibers of tropicalization}

\author[Payne]{Sam Payne}

\begin{abstract}
We use functoriality of tropicalization and the geometry of projections of subvarieties of tori to show that the fibers of the tropicalization map are dense in the Zariski topology.  For subvarieties of tori over fields of generalized power series, points in each tropical fiber are obtained ``constructively" using Kedlaya's transfinite version of Newton's method.
\end{abstract}

\maketitle

\section{Introduction}

A well-known folk theorem says that the tropicalization map for subvarieties of tori over algebraically closed nonarchimedean fields is surjective.  This result has been published in various forms, both without proof in \cite[Theorem~9.14]{Sturmfels02}, and with proposed proofs in \cite[Theorem~2.1 and Corollary~2.2]{SpeyerSturmfels04} and \cite[Proposition~2.1.2]{SpeyerThesis}.  However, as several researchers have noted, there is a  critical gap in these published proofs.\footnote{In \cite{SpeyerSturmfels04}, the ring $(S^{-1}R_K[x]/S^{-1}(I \cap R_K[x]) ) \otimes_{R_K} K$ is isomorphic to a localization of $K[x]/I$, not to $K[x]/I$ itself, as implicitly claimed in the first sentence of the third paragraph of the proof of Theorem~2.1.  It is unclear why a maximal ideal in this localization, which corresponds to an ideal in $K[x]/I$ that is maximal among those whose intersection with $R_K[x]/I$ is contained in $\overline m$, necessarily corresponds to a maximal ideal in $K[x]/I$.  In \cite{SpeyerThesis}, the notation is different, but the missing step is essentially the same.  If $R_K$ was a discrete valuation ring then this gap could be filled by applying standard arguments from intersection theory to $\Spec R_K[x]/I$, as in \cite{Katz06}.  However, most $K$-varieties are not defined over discrete valuation subrings, so another argument is required.}

The idea of nonarchimedean amoebas and surjectivity of tropicalization seems to have origins in an unpublished manuscript of Kapranov, which treats the hypersurface case, and was eventually partially incorporated into the paper \cite{EKL}.  Since problems were noted in the published proofs of the general case, other versions of surjectivity of tropicalization have appeared in a rapid series of preprints.  Katz descends to a discrete valuation ring to show surjectivity onto rational points for varieties over fields of Puiseux series in characteristic zero \cite[Lemma~4.15]{Katz06}.  In the same setting, Jensen, Markwig, and Markwig give an implementable algorithm for producing approximations to a point in each rational fiber \cite{JMM}.  Another approach, using affinoid algebras and rigid analytic geometry, is developed in \cite[Theorem~4.2]{Draisma06}.

Here we prove a strong form of surjectivity of tropicalization over an abitrary algebraically closed nonarchimedean field, showing that the fibers of tropicalization are not only nonempty but Zariski dense.  The main new idea in the proof is to use the functorial properties of tropicalization to reduce to the hypersurface case via projections to smaller dimensional tori.

\section{Preliminaries}

\subsection{Fields}

Let $K$ be an algebraically closed field and let $\nu: K^* \rightarrow \R$ be a nonarchimedean valuation.  Say $R \subset K$ is the valuation ring for $\nu$, with $\m \subset R$ the maximal ideal and $k = R/\m$, and let $G \subset \R$ be the image of $\nu$.  Since $K$ is algebraically closed, the residue field $k$ is also algebraically closed and the valuation group $G$ is divisible.

\begin{example}
Suppose $k$ is an algebraically closed field and $G \subset \R$ is a divisible subgroup.  Then the field of  generalized power series $K = k((t^G))$ with coefficients in $k$ and exponents in $G$ is algebraically closed.  The elements of $K$ are the formal sums 
$a = \sum_{i \in G} a_i t^i$,
with $a_i$ in $k$, such that the support of $a$
\[
\supp (a) = \{ i \in G \ | \ a_i \neq 0 \}
\]
is well-ordered.  The map $\nu: K^* \rightarrow G$ taking a nonzero generalized power series to the minimal element of its support is a nonarchimedean valuation.

See the introductory sections of \cite{Poonen93} for an elegant overview of generalized power series rings and their history and \cite[pp.\ 598--602]{Passman77} for proofs that the na\"ive operations of addition and multiplication of formal sums are well-defined and that every nonzero element of $K$ has a multiplicative inverse.   Ribenboim proved that K is algebraically closed using the theory of maximal immediate extensions 
\cite{Ribenboim92}. Kedlaya gave another proof using a transfinite version of Newton's algorithm \cite{Kedlaya01}, which inspired some of the arguments in this paper. These results are related to earlier work on algebraic properties of generalized power series by Rayner \cite{Rayner68} and \c Stef\u anescu \cite{Stefanescu82, Stefanescu83}. See also \cite{McDonald95} and \cite{Kedlaya01b}. 
\end{example}

\noindent Throughout this note, we will return to the example $K = k((t^\R))$ of generalized power series with real exponents, and the reader may wish to keep this special case in mind.

\begin{remark}
 The main reason for working with generalized power series, instead of Puiseux series, is that fields of Puiseux series are never algebraically closed in positive characteristic. If $k$ has characteristic $p > 0$ then the solutions of the Artin-Schreier polynomial 
 $x^p - x - t^{-1}$ in the generalized power series field $k((t^\Q ))$ are 
\[
(t^{-1/p} + t^{-1/p^2} + t^{-1/p^3}+ \cdots ) + c,
\]
for $c \in \F_p$ , and none of these solutions lie in the Puiseux subfield. Of course, when $k$ is algebraically closed of characteristic zero, the Puiseux field $k \{\{t\}\}$ is algebraically closed with valuation group $G = \Q$, and the methods and results of this paper apply.
\end{remark}

\begin{remark}
Nonarchimedean fields other than Puiseux series and generalized power series occur naturally in many contexts. For instance, the nonarchimedean valuations on number fields and function fields of curves are used to construct ``adelic amoebas", which control nonexpansive sets in algebraic dynamics. See \cite[Section~4]{EKL} and \cite{adelicamoebas}. For such applications, it is essential to develop the 
basic properties of tropicalization for more general nonarchimedean fields. 
\end{remark}

\subsection{Tropicalization of tori and their subvarieties}
Let $T$ be an algebraic torus, with $M$ its lattice of characters.  Let $N = \Hom(M,\Z)$ be the dual lattice of one parameter subgroups, and let $N_G = \Hom(M,G)$, which is a subgroup of the real vector space $N_\R = \Hom(M, \R)$.  Recall that there is a natural bijection from $T(K)$ to the set of group maps $M \rightarrow K^*$ taking $x \in T(K)$ to the evaluation map
\[
\ev_x : M \rightarrow K^*,
\]
given by $u \mapsto \chi^u(x)$.  Composing $\ev_x$ with $\nu$ gives a group homomorphism $M \rightarrow G$, which we denote by $\Trop(x) \in N_G$.  This gives a tropicalization map
\[
\Trop: T(K) \rightarrow N_\R.
\]
For any subvariety $X$ of $T$, we write $\Trop(X)$ for the image of $X(K)$ under $\Trop$.  

\begin{example}
Suppose $K = k((t^\R))$.  Any choice of basis for the lattice of characters induces a splitting of the torus, $T \cong (K^*)^n$ and an isomorphism $N_\R \cong \R^n$.  In such coordinates, the tropicalization map from $(K^*)^n$ to $\R^n$ takes a tuple of nonzero generalized power series $(x_1, \ldots, x_n)$ to its vector of valuations $(\nu(x_1), \ldots, \nu(x_n))$.
\end{example}

\subsection{Functoriality of tropicalization}
Let $\varphi: T \rightarrow T'$ be a map of tori, and let $M'$ be the character lattice of $T'$.  Pulling back characters gives a natural map of lattices $\varphi^*: M' \rightarrow M$.  Let $\phi: N_G \rightarrow N'_G$ be the dual map on $\Hom$ groups.

\begin{proposition} \label{coarse functoriality}
If $\varphi: T \rightarrow T'$ is a map of tori, then
\[
\Trop \circ
\varphi = \phi \circ \Trop.
\]
\end{proposition}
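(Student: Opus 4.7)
The plan is to prove the proposition by unwinding definitions and checking equality as elements of $\Hom(M',G) = N'_G$. Both $\Trop \circ \varphi$ and $\phi \circ \Trop$ send a point $x \in T(K)$ to a group homomorphism $M' \to G$, so it suffices to fix $x \in T(K)$ and $u' \in M'$ and verify that the two homomorphisms take the same value on $u'$.

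First, I would write down the left-hand side. By the definition of the tropicalization map, $\Trop(\varphi(x))$ is the homomorphism $M' \to G$ sending $u' \mapsto \nu(\chi^{u'}(\varphi(x)))$. Next, I would compute the right-hand side: by the definition of $\phi$ as the dual of $\varphi^*$, the homomorphism $\phi(\Trop(x)) \in \Hom(M',G)$ sends $u' \mapsto \Trop(x)(\varphi^*(u')) = \nu(\chi^{\varphi^*(u')}(x))$.

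The two values agree because of the defining property of the pullback map $\varphi^*$ on characters, namely that $\chi^{\varphi^*(u')} = \chi^{u'} \circ \varphi$ as regular functions on $T$. Evaluating both sides at $x$ gives $\chi^{\varphi^*(u')}(x) = \chi^{u'}(\varphi(x))$, and applying $\nu$ yields the desired equality of the two expressions obtained in the previous paragraph. There is no real obstacle here; the statement is a formal consequence of the definitions of $\Trop$, $\phi$, and $\varphi^*$, and the only thing to keep straight is which bijection $T(K) \leftrightarrow \Hom(M,K^*)$ and which duality $N_G = \Hom(M,G)$ we are using at each step.
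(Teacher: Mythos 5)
Your proof is correct and is essentially the same argument as the paper's: both sides, evaluated at $x \in T(K)$, are the homomorphism $u' \mapsto \nu(\chi^{u'}(\varphi(x)))$, with the identity $\chi^{\varphi^*(u')} = \chi^{u'} \circ \varphi$ doing the work. The paper simply compresses this unwinding into one sentence.
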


\begin{proof}
Both maps take $x \in T(K)$ to the linear function taking $u \in M'$ to the valuation of $\chi^u(\varphi(x))$.\end{proof}

\begin{corollary}
If $\varphi: T \rightarrow T'$ maps $X$ into $X'$, then $\phi$ maps $\Trop(X)$ into $\Trop(X')$.
\end{corollary}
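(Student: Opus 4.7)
My plan is to deduce the corollary directly from Proposition~\ref{coarse functoriality} by chasing an element through the definitions. I would begin by fixing an arbitrary $w \in \Trop(X)$. Unwinding the definition of $\Trop(X)$ as the image of $X(K)$ under the tropicalization map, I choose a point $x \in X(K)$ with $\Trop(x) = w$. Applying the proposition to this particular $x$ yields the identity $\phi(w) = \phi(\Trop(x)) = \Trop(\varphi(x))$.

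To close the argument, I invoke the hypothesis that $\varphi$ maps $X$ into $X'$, so that $\varphi(x) \in X'(K)$. Hence $\Trop(\varphi(x))$ lies in $\Trop(X') = \Trop\bigl(X'(K)\bigr)$ by the very definition of the latter, giving $\phi(w) \in \Trop(X')$. Since $w$ was arbitrary, $\phi\bigl(\Trop(X)\bigr) \subseteq \Trop(X')$, which is the claim.

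I do not expect any real obstacle here. Once the pointwise identity $\Trop \circ \varphi = \phi \circ \Trop$ is in hand, the corollary is a purely formal consequence of it, amounting to the observation that whenever two set maps $f,g$ satisfy $g\circ\varphi = \phi\circ f$ and $\varphi$ sends a subset $X$ into $X'$, the induced map $\phi$ on images sends $f(X)$ into $g(X')$. The only thing one must be a little careful about is to distinguish between the map of varieties $\varphi$ and the induced map $\phi$ on $N_G$, and to apply Proposition~\ref{coarse functoriality} at the level of $K$-points rather than at the level of schemes.
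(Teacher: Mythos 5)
Your proof is correct and matches the paper's intent: the paper states this corollary as an immediate consequence of Proposition~\ref{coarse functoriality}, and your element chase (lift $w$ to $x \in X(K)$, apply $\Trop \circ \varphi = \phi \circ \Trop$, and use $\varphi(x) \in X'(K)$) is exactly the routine verification being left to the reader.
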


\noindent Furthermore, the map of $\Hom$ groups corresponding to a composition of maps of tori is the composition of the corresponding $\Hom$ group maps, so $\Trop$ is a functor from $K$-subvarieties of tori to subsets of $\Hom$ groups.

\subsection{Residue maps and exploded tropicalization}

The fiber of the tropicalization map over the origin is
\[
\Trop^{-1}(0) = T(R),
\]
since $T(R)$ is the subgroup of $T(K)$ consisting of points $x$ such that $\ev_x$ maps $M$ into $R^*$.  In other words, the tropicalization map fits into a short exact sequence of abelian groups
\[
0 \rightarrow T(R) \longrightarrow T(K) \xrightarrow{\Trop} N_G \rightarrow 0.
\]
For points in $T(R)$, composing the evaluation map $\ev_x$ from $M$ to $R^*$ with the residue map from $R^*$ to $k^*$ gives a natural map to $T(k)$.  For a point $x \in T(R)$, we write $\underline{x}$ for its image, or ``tropical residue", in $T(k)$.

If $v$ is a nonzero point in $N_G$, then $\Trop^{-1}(v)$ is a coset of $T(R)$ in $T(K)$.  In particular, $\Trop^{-1}(v)$ is a torsor\footnote{See \cite{Baez-torsors} for an excellent motivational introduction to torsors, with an emphasis on their simplicity and naturality in mathematics and physics.} over $T(R)$.  Furthermore, the action of $T(R)$ on $\Trop^{-1}(v)$ is algebraic, in the sense that $\Trop^{-1}(v)$ is naturally identified with the set of $R$-points of a $T$-torsor $T_v$, as follows.  

Let the ``tilted group ring" $R[M]^v$ be the subset of $K[M]$ consisting of those Laurent polynomials $\sum a_u x^u$ such that $\nu(a_u)$ is greater than or equal to $\<u,v\>$ whenever $a_u$ is nonzero.  It is straightforward to check that $R[M]^v$ is closed under multiplication and addition, and contains $R$.  We define
\[
T_v = \Spec R[M]^v.
\]
Then there is a natural map of $R$-algebras
\[
R[M]^v \rightarrow R[M] \otimes R[M]^v
\]
given by $a x^u \mapsto x^u \otimes a x^u$, corresponding to a natural simply transitive group action
of $T$ on $T_v$.  This gives $T_v$ canonically the structure of a $T$-torsor defined over $R$.

Now $T_v(R)$ is the set of $R$-algebra maps $R[M]^v \rightarrow R$.  Given a point $x$ in $\Trop^{-1}(v)$, we get a point in $T_v(R)$ by taking $a_u x^u$ to $a_u / \ev_x(u)$, and it is straightforward to check that the induced natural map
\[
\Trop^{-1}(v) \xrightarrow{\sim} T_v(R)
\]
is bijective.  Composing with the residue map from $T_v(R)$ to $T_v(k)$ then gives a canonical map taking a point $x$ in $\Trop^{-1}(v)$, to its tropical residue $\underline x$ in $T_v(k)$.

\begin{remark}
Since $K$ is algebraically closed, $T_v(R)$ is not empty, and for any point in $T_v(R)$ there is a unique isomorphism from $T_v$ to $T$ taking this point to the identity.  However, there is no canonical way to choose a point in each $T_v(R)$ in general, so to preserve the functoriality of our basic constructions we are obliged to work with the torsors themselves.
\end{remark}

\begin{example} \label{splitting}
The situation is somewhat simpler when there is a natural section of $\Trop$ that splits the short exact sequence above.  For instance, suppose $K = k((t^\R))$ and fix $v \in N_\R$.  Let $y$ be the point in $T(K)$ given by
\[
\ev_y(u) = t^{-\<u,v\>}.
\]
Then translation by $y$ maps $\Trop^{-1}(v)$ naturally and  bijectively onto $\Trop^{-1}(0)$, inducing a natural isomorphism $T_v \cong T$.  Therefore, in this case we may think of the residue map as taking $\Trop^{-1}(v)$ into $T(k)$.  If we choose coordinates by fixing an isomorphism identifying $T$ with $(K^*)^n$ then this residue map takes a tuple of nonzero generalized power series $(x_1, \ldots, x_n)$ to its tuple of leading coefficients $(\underline x_1, \ldots, \underline x_n)$.
\end{example}

\begin{definition}
The exploded tropicalization map is the natural map
\[
\TTrop: T(K) \rightarrow \coprod_v T_v(k)
\]
taking a point $x$ in $\Trop^{-1}(v)$ to its tropical residue $\underline x$ in $T_v(k)$.
\end{definition}

\noindent For any subvariety $X \subset T$, we write $\TTrop(X)$ for the image of $X(K)$ under the exploded tropicalization map.

\vspace{5 pt}

\begin{remark}
The exploded tropicalization $\TTrop(X)$ may be thought of as an algebraic analogue of the ``exploded torus fibrationsÓ studied by Parker from a symplectic veiwpoint \cite{Parker07, Parker08}. In particular, if the residue field $k$ is the field of complex numbers and each tropical reduction $X_v$ is smooth, then $\TTrop(X)$ is naturally identified with the set of points of an object in Parker's exploded category.
\end{remark}

Suppose $\varphi: T \rightarrow T'$ is a map of tori, and let $\phi: N_G \rightarrow N'_G$ be the induced group homomorphism.  Then it is straightforward to check that for any $v$ in $N_G$, the pullback under $\varphi$ of the tilted group ring $R[M']^{\phi(v)}$ is contained in $R[M]^v$ and that this induces a map of torsors 
\[
\varphi_v : T_v \rightarrow T'_{\phi(v)},
\]
over $\varphi$.

\begin{proposition}
If $\varphi: T \rightarrow T'$ is a map of tori, then
\[
\TTrop \circ \varphi =  \coprod_v \varphi_v \circ \TTrop.
\]
\end{proposition}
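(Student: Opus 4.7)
The plan is to prove the identity pointwise. Fix $x \in T(K)$ and set $v = \Trop(x)$. By Proposition~\ref{coarse functoriality}, $\Trop(\varphi(x)) = \phi(v)$, so both $\TTrop(\varphi(x))$ and $\varphi_v(\TTrop(x))$ lie in the component $T'_{\phi(v)}(k)$ of $\coprod_{v'} T'_{v'}(k)$, and the remaining task is to check that they coincide as $k$-points of this single torsor.

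To compare them, I would translate everything into $R$-algebra homomorphisms. Unwinding the natural bijection $\Trop^{-1}(v) \xrightarrow{\sim} T_v(R)$, the point $x$ corresponds to the map $\psi_x : R[M]^v \to R$ sending $a_u x^u$ to $a_u/\ev_x(u)$, and $\underline{x}$ is the $k$-point obtained by reducing $\psi_x$ modulo $\m$. In exactly the same way, $\underline{\varphi(x)} \in T'_{\phi(v)}(k)$ is the reduction of $\psi_{\varphi(x)} : R[M']^{\phi(v)} \to R$ with $a_{u'} x^{u'} \mapsto a_{u'}/\ev_{\varphi(x)}(u')$. Meanwhile, by construction the map $\varphi_v$ is induced on coordinate rings by the inclusion $R[M']^{\phi(v)} \hookrightarrow R[M]^v$ sending $a_{u'} x^{u'}$ to $a_{u'} x^{\varphi^*(u')}$, which was noted in the paragraph preceding the proposition.

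The proof then reduces to verifying that $\psi_{\varphi(x)}$ equals $\psi_x$ precomposed with this inclusion, and this is a one-line identity: $\ev_{\varphi(x)}(u') = \chi^{u'}(\varphi(x)) = \chi^{\varphi^*(u')}(x) = \ev_x(\varphi^*(u'))$, so both sides of the alleged equality send $a_{u'} x^{u'}$ to $a_{u'}/\ev_x(\varphi^*(u'))$. Compatibility with reduction modulo $\m$ is then automatic. The argument is entirely a diagram chase, and I do not anticipate a serious obstacle; the only thing worth double-checking is that the identification $\Trop^{-1}(v) \cong T_v(R)$ is genuinely compatible with pullback along $\varphi$ in the sense described, which amounts to the containment $\varphi^*(R[M']^{\phi(v)}) \subset R[M]^v$ already recorded above.
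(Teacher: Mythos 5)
Your argument is correct and is essentially the paper's own proof, spelled out in more detail: both identify the two sides with the $k$-point of $T'_{\phi(v)}$ obtained by reducing $f \mapsto \ev_x(\varphi^* f)$ modulo $\m$, using $\ev_{\varphi(x)}(u') = \ev_x(\varphi^*(u'))$ and the containment $\varphi^*(R[M']^{\phi(v)}) \subset R[M]^v$. No issues.
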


\begin{proof}
Let $x$ be a point in $\Trop^{-1}(v)$ whose image in $T_v(R)$ is given by
\[
\ev_x : R[M]^v \rightarrow R.
\]  
Then both maps take $x$ to the point $y$ in $T'_{\phi(v)}(k)$, where $\ev_y$ takes $f \in R[M']^{\phi(v)}$ to the residue of $\ev_x (\varphi^*f)$ in $R/\m$.  
\end{proof}

\begin{corollary}
If $\varphi: T \rightarrow T'$ maps $X$ into $X'$, then $\coprod_v \varphi_v$ maps $\TTrop(X)$ into $\TTrop(X')$.
\end{corollary}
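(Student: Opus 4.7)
The plan is to deduce the corollary directly from the preceding proposition, in exactly the same way that the earlier corollary for $\Trop$ was deduced from Proposition~\ref{coarse functoriality}. The content to be verified is purely set-theoretic on $K$-points: the hypothesis ``$\varphi$ maps $X$ into $X'$'' says that $\varphi$ sends $X(K)$ into $X'(K)$, and the proposition records the compatibility of the exploded tropicalization with $\varphi$, so nothing remains but to chase an element.

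Concretely, I would start with an arbitrary $\eta \in \TTrop(X)$ and choose $x \in X(K)$ with $\TTrop(x) = \eta$. Writing $v = \Trop(x)$, we have $\eta = \underline{x} \in T_v(k)$. Applying the proposition yields
\[
\Bigl(\coprod_v \varphi_v\Bigr)(\eta) \;=\; \varphi_v(\underline{x}) \;=\; \TTrop(\varphi(x)).
\]
Since $\varphi(x) \in X'(K)$ by hypothesis, the right-hand side lies in $\TTrop(X')$ by the very definition of the latter, which is exactly what we need.

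I do not anticipate any genuine obstacle: the proposition does all the work. The only bit of bookkeeping worth flagging is that $\varphi_v$ maps $T_v(k)$ into $T'_{\phi(v)}(k)$ rather than into $T'_v(k)$, so one should read $\coprod_v \varphi_v$ as the obvious map $\coprod_v T_v(k) \to \coprod_{v'} T'_{v'}(k)$ sending the $v$-component into the $\phi(v)$-component. With this convention in place, the computation above is literally the identity, and the corollary follows.
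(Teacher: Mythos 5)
Your proposal is correct and is exactly the argument the paper intends: the corollary is stated as an immediate consequence of the proposition $\TTrop \circ \varphi = \coprod_v \varphi_v \circ \TTrop$, and your element chase (together with the bookkeeping remark that $\varphi_v$ lands in the $\phi(v)$-component) is precisely that deduction, mirroring the earlier corollary for $\Trop$.
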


\noindent It follows that the exploded tropicalization map $\TTrop$ is a functor from $K$-subvarie\-ties of tori to subsets of the set of $k$-points of a disjoint union of torus torsors.  It comes with a ``forgetful" natural transformation to the ordinary tropicalization functor $\Trop$ that takes points in $T_v(k)$ to $v$.

\section{Initial forms}

We briefly recall some basic ideas related to initial ideals, as developed in connection with tropical geometry by Speyer and Sturmfels \cite{SpeyerSturmfels04, SpeyerThesis}.  

Recall that $T_v$ is canonically defined over the valuation ring $R$, so the fiber over the closed point of $\Spec R$ is a scheme over the residue field $k = R/\m$.  The coordinate ring $k[T_v]$ of this scheme is a quotient of the tilted group ring $R[M]^v$, as follows.  For each real number $s$, let $\m^s$ (resp.\ $\m^s_+$) be the fractional ideal of $R$ whose nonzero elements are the elements of $K^*$ with valuation greater than or equal to $s$ (resp.\ strictly greater than $s$).  Let $k^s = \m^s / \m^s_+$.  Then $R[M]^v = \bigoplus_{u \in M} \m^{\<u,v\>}$, and $k[T_v]$ is the natural quotient
\[
k[T_v] = \bigoplus_{u \in M} k^{\<u,v\>}.
\]

Now each point $v \in N_\R$ determines a weight function on monomials in $K[M]$, given by
\[
b x^u \mapsto \nu(b) - \<u,v\>,
\]
for $u \in M$ and $b \in K^*$.  If all of the terms of a Laurent polynomial $f \in K[M]$ have nonnegative weight with respect to $v$, then $f$ is in the tilted group ring $R[M]^{v}$, and the initial form $\init_v(f)$ is defined to be the image of $f$ in $k[T_v]$.

\begin{definition} 
For any $K$-subvariety $X \subset T$, the tropical reduction $X_v$ is the $k$-subvariety of $T_v$ cut out by the initial forms of all Laurent polynomials in the intersection of $R[M]^v$ and the ideal of $X$.
\end{definition}

\noindent The tropical degeneration $X_v$ is closely related to the initial degenerations $\init_v(X)$ of subvarieties of affine space studied in Gr\"obner theory.  The notation and terminology here is meant to emphasize that $X_v$ lives inside the torus torsor $T_v$, and not in some partial compactification.

\begin{lemma} \label{reduction lemma}
If $x$ is a point in $\Trop^{-1}(v) \cap X(K)$, then $\underline x$ is in $X_v(k)$.
\end{lemma}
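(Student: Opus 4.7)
The plan is to unwind the definitions on both sides of the desired inclusion. The tropical reduction $X_v$ is, by construction, the $k$-subvariety of $T_v$ cut out by the initial forms $\init_v(f)$ for $f \in R[M]^v \cap I(X)$, where $\init_v : R[M]^v \twoheadrightarrow k[T_v]$ is the natural quotient reducing each coefficient $a_u$ modulo $\m^{\<u,v\>}_+$. Meanwhile, the tropical residue $\underline x \in T_v(k)$ is defined via the $R$-algebra map $\phi_x : R[M]^v \to R$ coming from the identification $\Trop^{-1}(v) \xrightarrow{\sim} T_v(R)$, followed by the residue $R \to k$. So it suffices to show, for every $f \in R[M]^v \cap I(X)$, that the image of $f$ under $R[M]^v \xrightarrow{\phi_x} R \to k$ vanishes.

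The key observation is that $\phi_x$ is the appropriate restriction of evaluation at $x$: the hypothesis $\Trop(x) = v$ is precisely the compatibility needed for the evaluation map $K[M] \to K$, $\chi^u \mapsto \ev_x(u)$, to restrict to an $R$-algebra map on $R[M]^v$, because the valuation of each monomial $a_u \ev_x(u)$ is controlled against $\<u,v\> = \nu(\ev_x(u))$ in exactly the way required by the defining condition on $R[M]^v$. Consequently, for $f \in R[M]^v \cap I(X)$, the image $\phi_x(f)$ is just $f(x)$, which vanishes in $R$ because $f$ lies in the ideal of $X$ and $x \in X(K)$.

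Next I would check that the composition $R[M]^v \xrightarrow{\phi_x} R \to k$ factors through $\init_v$: any element of the kernel of $\init_v$ is a sum of monomials $a_u \chi^u$ with $a_u \in \m^{\<u,v\>}_+$, and $\phi_x$ sends each such monomial to an element of strictly positive valuation in $R$, which dies in $k$. The induced map $k[T_v] \to k$ agrees by construction with the $k$-algebra map corresponding to $\underline x$, so $\init_v(f)(\underline x) = \underline{f(x)} = 0$ in $k$. Since this holds for every $f \in R[M]^v \cap I(X)$, and $X_v$ is the common vanishing locus of these initial forms, we conclude $\underline x \in X_v(k)$.

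There is no substantive obstacle here; the argument is essentially a diagram chase. The one step that requires a moment's attention is the identification of $\phi_x$ with restricted evaluation, which amounts to matching the hypothesis $\Trop(x) = v$ against the defining valuation condition on $R[M]^v$ monomial by monomial. Once that identification is in hand, the vanishing of $f(x)$ for $f$ in the ideal of $X$ propagates mechanically through the residue map to give the desired vanishing of $\init_v(f)$ at $\underline x$.
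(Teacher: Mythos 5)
Your proof is correct and is essentially the paper's own argument: the paper's one-line proof likewise observes that for $f \in R[M]^v$ in the ideal of $X$, evaluation at $x$ lands in $R$ and $\init_v(f)(\underline x)$ is the residue of $f(x)$ in $k$, so the vanishing of $f(x)$ gives the vanishing of $\init_v(f)(\underline x)$. You simply spell out the diagram chase (that the residue of the evaluation map $\Trop^{-1}(v)\cong T_v(R)\to R\to k$ factors through $\init_v$ and agrees with evaluation at $\underline x$) that the paper compresses into a single sentence.
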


\begin{proof}
If $x$ is a point in $X(K)$ and $f \in K[M]$ is a Laurent polynomial whose monomials have nonnegative weight with respect to $v$, then $f(x)$ is in $R$, and $\init_v(f)(\underline x)$ is the image of $f(x)$ in $R/\m$.  In particular, if $f(x)$ vanishes then $\init_v(f) (\underline x)$ vanishes.
\end{proof}

\begin{example}
Suppose $K = k((t^\R))$.  Then each Laurent polynomial $f \in K[M]$ can be written uniquely as a formal sum of monomials $f = \sum_{u,i} a_{u,i} x^u t^i$, with coefficients $a_{u,i} \in k^*$.  Then the weight of a monomial $a_{u,i} x^u t^i$ is $i - \<u,v\>$, and it is customary to define the initial  form $\init_v(f) \in k[M]$ to be the sum of the monomials of lowest weight.  This agrees with the definition in the general case above after identifying $k[T_v]$ with $k[M]$, as in Example~\ref{splitting}, provided that the weight of the lowest weight monomials is zero.  All such initial forms of Laurent polynomials over in a given ideal in $K[M]$ is clearly in the ideal generated by those of weight zero, so the resulting degenerations are the same.
\end{example}

\section{Statement and applications of main result}

Roughly speaking, Lemma \ref{reduction lemma} says that if the valuation vector $\Trop(x)$ of a point $x \in X(K)$ is equal to $v$, then its leading coefficient vector $\underline x$ is in $X_v(k)$.  Surjectivity of tropicalization then says that any point in $X_v(k)$ can be lifted to a point in $\Trop^{-1}(v) \cap X(K)$.  Our main result is that the set of all such lifts is Zariski dense.

\begin{theorem} \label{main}
For any $v \in N_G$ and any $\underline x \in X_v(k)$,
\[
\TTrop^{-1}(\underline x) \cap X(K)
\]
is Zariski dense in $X(K)$.
\end{theorem}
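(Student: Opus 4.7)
My plan is to reduce the density claim to surjectivity of tropicalization for an auxiliary subvariety of a larger torus, then to reduce that surjectivity to the hypersurface case by a generic projection of tori, and finally to invoke Kedlaya's transfinite Newton algorithm in the hypersurface case. The key tool throughout is the functoriality of $\TTrop$ under maps of tori.

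To pass from nonemptiness to Zariski density, given a Laurent polynomial $g \in K[M]$ whose restriction to $X$ is not identically zero, I must produce a lift of $\underline x$ lying in $X \cap D(g)$. I introduce the auxiliary torus $T'' = T \times \mathbb{G}_m$ with coordinate $s$ and the closed subvariety $X'' = \{(x,s) \in T'' : x \in X,\ s \, g(x) = 1\}$, which projects isomorphically onto $X \cap D(g) \subseteq X$. A residue point $(\underline x, \underline s) \in X''_{v''}(k)$ over $v'' = (v, -\alpha)$ pulls back, via functoriality of $\TTrop$ under the projection $T'' \to T$, to a lift in $X \cap D(g)$. For $\alpha$ equal to the $v$-weight $w$ of $g$, the residue condition reduces to $\underline s \cdot \init_v(g)(\underline x) = 1$, which is solvable when $\init_v(g)(\underline x) \neq 0$; when $\init_v(g)$ vanishes at $\underline x$, one must choose $\alpha > w$ and handle the resulting residue condition by an inductive argument on the defect, or replace $g$ by a modification cutting out the same subvariety of $X$ with nonvanishing initial form at $\underline x$.

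To prove nonemptiness I reduce to the hypersurface case via a generic projection. If $X$ is not already a hypersurface in $T$, pick a surjective homomorphism $\varphi : T \to T'$ with $\dim T' = \dim X + 1$ chosen generically so that $\varphi|_X$ is dominant with finite generic fiber onto its image $Y \subset T'$, necessarily a hypersurface, and so that $\underline x$ is the unique preimage in $X_v$ of $\varphi_v(\underline x) \in Y_{\phi(v)}$ under $\varphi_v$. Applying the hypersurface case to $Y$ produces a lift $\overline y \in Y(K)$ of $\varphi_v(\underline x)$; finiteness of $\varphi|_X$ yields a preimage $x \in X(K)$ of $\overline y$, and functoriality together with the uniqueness arranged in $\varphi$ forces $\TTrop(x) = \underline x$. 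In the hypersurface case itself, write $X = V(f)$ with $f \in K[M]$ and choose a coordinate $x_j$ such that $\partial \init_v(f)/\partial x_j$ is nonzero at $\underline x$, which is possible at smooth residue points (singular residue points of the initial degeneration are handled by transfinite iteration). Viewing $f$ as a polynomial in $x_j$ over the Laurent ring in the remaining coordinates and lifting the latter arbitrarily while preserving their residues, Kedlaya's transfinite Newton algorithm over $k((t^G))$ yields the desired root $x_j \in K^*$. For a general algebraically closed nonarchimedean $K$, I embed $K$ into a maximal immediate extension --- a generalized power series field by Kaplansky--Ribenboim --- and descend the lift.

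The main obstacle I expect is coordinating the two levels of the projection reduction: the linear projection $\varphi$ must be generic enough to make $\varphi|_X$ generically finite onto a hypersurface in $T'$, while simultaneously ensuring that $\varphi_v|_{X_v}$ separates $\underline x$ from the rest of its fiber. This requires a careful genericity argument over the Grassmannian of saturated sublattices $M' \hookrightarrow M$, controlling both the $K$-variety $X$ and its tropical reduction $X_v$ under a common linear map, and a parallel delicacy arises in dispatching the case $\init_v(g)(\underline x) = 0$ in the density reduction.
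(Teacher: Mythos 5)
Your overall architecture is genuinely different from the paper's (which proves density by joining a known lift to a general point by a curve, projecting the curve to a two-dimensional torus, and using infinitude of lifts for plane curves), but both of your reduction steps have real gaps. In the density step, the Rabinowitsch-type trick requires the candidate residue point $(\underline x,\underline s)$ to lie in $X''_{v''}(k)$; however $X''_{v''}$ is cut out by the initial forms of \emph{all} elements of the ideal of $X''$, not just of the chosen generators coming from $I(X)$ and $sg-1$. The claim that $\init_{v''}\bigl(I(X)+(sg-1)\bigr)$ is controlled by $\init_v I(X)$ together with $s\,\init_v(g)-1$ is a nontrivial Gr\"obner-type statement that you assert rather than prove, and the case $\init_v(g)(\underline x)=0$ cannot be dismissed: $g$ may be chosen adversarially (for instance congruent to an element of $I(X)$ up to higher-weight terms) so that its initial form vanishes identically on $X_v$ while $g$ is nonzero on $X$; your ``induction on the defect, or replace $g$'' is exactly the missing argument.

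The nonemptiness reduction also fails as written. A generic projection makes $\varphi|_X$ only generically finite; $\varphi(X)$ is merely a constructible dense subset of $Y$ (e.g.\ $V(x+y+1)\subset \mathbb{G}_m^2$ projects onto $\mathbb{G}_m\smallsetminus\{-1\}$), so the single lift $\overline y\in Y(K)$ produced by the hypersurface case may have no preimage in $X$; ``finiteness of $\varphi|_X$'' is not available without a properness-type condition. This is precisely why the paper proves density (in fact infinitude) of lifts in the hypersurface case, so that some lift lands in the open dense image, and why its Proposition~\ref{general projection} demands both that $X_w$ be empty for all nonzero $w\in\ker\phi$ and that $X_0\cap\ker\varphi=\{1_T\}$: you need the first condition (which you omit) just to conclude $\Trop(x)=v$ before your uniqueness-within-$X_v$ condition can force $\TTrop(x)=\underline x$, and the genericity argument you defer as the ``main obstacle'' is exactly the content of that proposition. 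Finally, your hypersurface step is both heavier and partly wrong for general $K$: an algebraically closed nonarchimedean field of mixed characteristic such as $\mathbb{C}_p$ embeds in no Hahn series field $k((t^G))$, so the Kaplansky--Ribenboim detour does not apply; the paper instead fixes unit lifts of all but one coordinate, factors the resulting one-variable polynomial into linear factors over the algebraically closed field $K$, and uses multiplicativity of initial forms, with Kedlaya's transfinite Newton algorithm needed only for the constructive statement over generalized power series.
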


\begin{corollary}
If $v$ is in $N_G$ and $X_v$ is nonempty then
\[
\Trop^{-1}(v) \cap X(K)
\]
is Zariski dense in $X(K)$.
\end{corollary}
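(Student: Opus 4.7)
The Corollary is a formal consequence of Theorem~\ref{main}: if $X_v$ is nonempty, pick any $\underline{x} \in X_v(k)$, and observe that $\TTrop^{-1}(\underline{x}) \subseteq \Trop^{-1}(v)$ via the forgetful natural transformation from $\TTrop$ to $\Trop$ described after the definition of the exploded tropicalization. Zariski density of $\TTrop^{-1}(\underline{x}) \cap X(K)$ in $X(K)$ therefore implies Zariski density of $\Trop^{-1}(v) \cap X(K)$ in $X(K)$. The real content lies in Theorem~\ref{main}, whose proof I sketch in two parts, following the strategy advertised in the introduction: reduce to the hypersurface case via a generic projection to a smaller-dimensional torus.

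For the hypersurface case, let $X = V(f) \subset T$ be irreducible with $\underline{x} \in X_v(k)$. After a unimodular change of basis on $M$, I may assume $\init_v(f)$ depends nontrivially on the last coordinate $x_n$. My plan is to choose $(y_1,\ldots,y_{n-1}) \in (K^*)^{n-1}$ with the tropicalizations and residues prescribed by $\underline{x}$, and then solve $f(y_1,\ldots,y_{n-1},y_n) = 0$ for $y_n \in K^*$ using Kedlaya's transfinite version of Newton's method (in the generalized power series setting), or an analogous Hensel-type construction more generally. The resulting root has the prescribed tropicalization and residue $\underline{x}_n$, yielding a lift $y \in X(K)$ with $\TTrop(y) = \underline{x}$. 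Since the coset of $(y_1,\ldots,y_{n-1})$ satisfying the tropical constraints projects Zariski densely into the image of $X$ under the first $n-1$ coordinates, and this projection is dominant, the resulting family of lifts avoids any prescribed proper closed subvariety of $X$.

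For the general case, given $X \subset T$ of dimension $d < n-1$, the plan is to choose a projection $\varphi : T \to T'$ with $\dim T' = d+1$ so that $\varphi|_X$ is generically finite onto a hypersurface $X' := \overline{\varphi(X)} \subset T'$, and, crucially, so that the induced map of tropical reductions $\phi_v : X_v \to X'_{\phi(v)}$ has $\underline{x}$ as the sole element of the fiber over $\underline{x}' := \phi_v(\underline{x})$. Applying the hypersurface case to $X'$ at $\underline{x}'$ produces a Zariski dense set of lifts $x' \in X'(K)$ with $\TTrop(x') = \underline{x}'$. Taking preimages under the dominant map $\varphi|_X$ yields a Zariski dense subset of $X(K)$, and by functoriality of $\TTrop$ each such preimage $x$ satisfies $\TTrop(x) \in \phi_v^{-1}(\underline{x}')$, which by the second property of $\varphi$ equals $\{\underline{x}\}$.

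The main obstacle is precisely this second property: arranging that $\phi_v^{-1}(\underline{x}') \cap X_v(k) = \{\underline{x}\}$ for a projection $\varphi$ that is simultaneously generically finite on $X$. Generic finiteness of $\varphi|_X$ is a condition over $K$, while $\phi_v$ and its fibers depend on the lattice-level data $M' \subseteq M$ and on the combinatorics of $X_v$ over $k$; there is no a priori reason a single lattice choice can serve both. The heart of the argument, as the abstract promises, is a careful study of projections of subvarieties of tori, showing that a rank-$(d+1)$ sublattice $M' \subseteq M$ transverse, in a precise combinatorial sense, both to a tangent direction of $X$ at a smooth $K$-point and to the set of differences $\{\underline{y} - \underline{x} : \underline{y} \in X_v(k),\ \underline{y} \neq \underline{x}\}$ inside the ambient torus will work. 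Existence of such an $M'$ should reduce to a finite-dimensional condition on the Grassmannian of rank-$(d+1)$ sublattices of $M$, verifiable by a generic-projection argument carried out over $\Z$, after which the argument above completes the proof.
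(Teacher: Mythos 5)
Your first paragraph is exactly right, and it is all the Corollary needs: pick any $\underline{x} \in X_v(k)$, note $\TTrop^{-1}(\underline{x}) \subseteq \Trop^{-1}(v)$ via the forgetful natural transformation, and apply Theorem~\ref{main}. The paper gives no separate argument for the Corollary because this deduction is immediate, and yours matches it.

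The sketch you then volunteer for Theorem~\ref{main} itself, however, has a genuine gap in the reduction to the hypersurface case. The condition you single out as crucial --- that $\underline{x}$ be the sole point of $X_v(k)$ in the fiber of $\varphi_v$ over $\underline{x}'$ --- is not sufficient. If $x \in X(K)$ is a lift of a point $x' \in X'(K)$ with $\TTrop(x') = \underline{x}'$, functoriality only gives $\Trop(x) \in \phi^{-1}(\phi(v))$ and, by Lemma~\ref{reduction lemma}, $\TTrop(x) \in X_w(k)$ where $w = \Trop(x)$; nothing in your hypotheses forces $w = v$, so $\TTrop(x)$ could lie in some other tropical reduction $X_w$ entirely and your conclusion $\TTrop(x) = \underline{x}$ does not follow. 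This is precisely why Proposition~\ref{general projection} imposes, in addition to the hypersurface condition and the fiber condition (3), the requirement (2) that $X_w$ be empty for every nonzero $w$ in the kernel of $\phi$ (after translating so that $v=0$): condition (2) pins $\Trop(x)$ down to $v$, and only then does (3) identify the residue. Moreover, the existence of such a projection --- which you flag as the ``main obstacle'' and leave at the level of ``should reduce to'' a Grassmannian condition --- is where the paper's real work lies: conditions (1) and (2) come from the polyhedrality and pure-dimensionality of the tropical variety $S(X)$ (Bieri--Groves, Sturmfels), so that a general $d$-dimensional kernel meets $S(X)$ only at the origin and projects it onto a codimension-one polyhedral set, while (3) is proved by a separate lattice-theoretic induction on $\dim X$; your proposed transversality to the set of differences $\underline{y} - \underline{x}$ is not even a finite condition when $X_v$ is positive dimensional. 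Finally, note the paper does not prove density directly in the hypersurface case: Proposition~\ref{hypersurface} yields only infinitely many lifts, and density is obtained by passing to a curve through a general point and re-projecting; your alternative of varying the first $n-1$ coordinates over a dense coset and invoking dominance of the coordinate projection can be made to work for an irreducible hypersurface, and is a legitimate shortcut there, but it does not repair the missing projection hypotheses in the general case.
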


\begin{remark}
Arguments similar to those used in the proof of~Theorem \ref{main} show that each nonempty fiber of $\Trop$ is a generically finite cover of the set of $R$-points of a $d$-dimensional torus, where $R \subset K$ is the valuation ring.  Similarly, if $K = k((t^\R))$ then each nonempty fiber of $\TTrop$ is a generically finite cover of a product of $d$ copies the maximal ideal $\m \subset R$.  Roughly speaking, this means that each nonempty fiber of the tropicalization of a $d$-dimensional subvariety of a torus is intrinsically $d$-dimensional and therefore should not be contained in any $(d-1)$-dimensional subvariety.  This motivates Theorem~\ref{main}.
\end{remark}

As applications of Theorem~\ref{main}, we have the following surjectivity statement for projections of tropicalizations.

\begin{corollary}
If $\varphi: T \rightarrow T'$ is a map of tori and $X'$ is the closure of $\varphi(X)$ then $\coprod_v \varphi_v$ maps $\TTrop(X)$ surjectively onto $\TTrop(X')$.
\end{corollary}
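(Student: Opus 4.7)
The plan is to combine Theorem~\ref{main} applied to $X'$ with the functoriality identity $\TTrop \circ \varphi = \coprod_v \varphi_v \circ \TTrop$ established earlier in the preceding section.

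Fix a point $\underline y \in \TTrop(X')$, lying in $X'_{v'}(k)$ for some $v' \in N'_G$. My goal is to produce $x \in X(K)$ with $\coprod_v \varphi_v(\TTrop(x)) = \underline y$; by functoriality this is equivalent to producing $x \in X(K)$ with $\TTrop(\varphi(x)) = \underline y$. Equivalently, I must show that the fiber $\TTrop^{-1}(\underline y) \cap X'(K)$ meets the image $\varphi(X(K))$.

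The argument has two inputs. First, Theorem~\ref{main} applied to $X'$ and $\underline y$ asserts that the fiber $\TTrop^{-1}(\underline y) \cap X'(K)$ is Zariski dense in $X'(K)$. Second, by Chevalley's theorem the image $\varphi(X(K))$ is a constructible subset of $X'(K)$, and since $X'$ is by hypothesis the Zariski closure of $\varphi(X)$, this constructible set contains the $K$-points of some Zariski dense open subset $U \subset X'$. A Zariski dense subset must meet every dense open set, so the intersection $\TTrop^{-1}(\underline y) \cap U(K)$ is nonempty. Any point of this intersection has the form $\varphi(x)$ for some $x \in X(K)$, and then
\[
\coprod_v \varphi_v(\TTrop(x)) = \TTrop(\varphi(x)) = \underline y,
\]
as required.

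The substantive work is entirely in Theorem~\ref{main}; the rest is a straightforward pairing of Zariski density with Chevalley constructibility. It is worth emphasizing, however, why Zariski \emph{density} of the fiber is used here, rather than mere nonemptiness. A lift in $X'(K)$ produced by bare surjectivity of $\TTrop$ onto $\TTrop(X')$ need not lie in the typically proper subset $\varphi(X(K)) \subseteq X'(K)$; density of the fiber together with constructibility of $\varphi(X(K))$ is precisely what forces the existence of a lift that actually comes from $X$, and this is the only real obstacle to the proof.
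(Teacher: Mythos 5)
Your argument is correct and is exactly the derivation the paper intends: the corollary is stated as an immediate application of Theorem~\ref{main}, and your combination of the Zariski density of $\TTrop^{-1}(\underline y) \cap X'(K)$ with Chevalley's theorem (the image of $\varphi|_X$ contains a dense open subset of $X'$, whose $K$-points lift since $K$ is algebraically closed) together with the functoriality identity $\TTrop \circ \varphi = \coprod_v \varphi_v \circ \TTrop$ is the intended proof. Your closing remark correctly identifies why density, not mere surjectivity, is the needed input.
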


\noindent Specializing to the ordinary tropicalization map, we recover the following surjectivity result due to Tevelev \cite[Proposition~3]{Tevelev05}.\footnote{ Tevelev stated this result only for the algebraic closures of Puiseux fields, but his proof goes through in full generality.}

\begin{corollary}
If $\varphi: T \rightarrow T'$ is a map of tori and $X'$ is the closure of $\varphi(X)$ then $\phi: N_G \rightarrow N'_G$ maps $\Trop(X)$ surjectively onto $\Trop(X')$.
\end{corollary}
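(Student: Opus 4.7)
The plan is to deduce this from the immediately preceding corollary (the exploded version) by composing with the forgetful transformation $\coprod_v T_v(k) \to N_G$ that sends a point in $T_v(k)$ to $v$. The functoriality built up in Section 2 already tells us that this forgetful map intertwines the torsor map $\coprod_v \varphi_v$ with the linear map $\phi$, so surjectivity downstairs should follow formally from surjectivity upstairs.

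More concretely, I would start with an arbitrary $v' \in \Trop(X')$ and try to produce $v \in \Trop(X)$ with $\phi(v) = v'$. By definition of $\Trop(X')$, pick $y \in X'(K)$ with $\Trop(y) = v'$. Lemma~\ref{reduction lemma} then places the tropical residue $\underline y$ in $X'_{v'}(k)$, so $\underline y$ is a point of $\TTrop(X')$ sitting over $v'$. Applying the previous corollary to $\underline y$ yields some $\underline x \in \TTrop(X)$, necessarily lying in some $T_v(k)$, such that $\varphi_v(\underline x) = \underline y$. Since the torsor map $\varphi_v$ has target $T'_{\phi(v)}$, and since $\underline y \in T'_{v'}(k)$, we must have $\phi(v) = v'$. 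Finally, $\underline x \in \TTrop(X)$ means $\underline x = \TTrop(x)$ for some $x \in X(K)$, and this $x$ satisfies $\Trop(x) = v$, giving the desired preimage.

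Since the argument is purely formal once the exploded surjectivity is in hand, there is no real obstacle; the only subtlety is the bookkeeping that $\varphi_v$ lands in the torsor indexed by $\phi(v)$, which is precisely what allows us to read off $\phi(v) = v'$ from the equality of residues in $T'_{v'}(k)$. All the substantive content has been absorbed into Theorem~\ref{main} and its exploded corollary; this final statement is essentially a restatement of those results after forgetting the residue data.
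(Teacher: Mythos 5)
Your argument is correct and matches the paper, which states this corollary precisely as the specialization of the exploded surjectivity corollary via the forgetful map sending a point of $T_v(k)$ to $v$ (no separate proof is given there). Your bookkeeping that $\varphi_v$ lands in the component indexed by $\phi(v)$, hence $\phi(v)=v'$, is exactly the implicit content; the appeal to Lemma~\ref{reduction lemma} is harmless but unnecessary, since $\underline{y}\in\TTrop(X')$ already follows from $y\in X'(K)$ by definition.
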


\section{Projections of subvarieties of tori}

Let $T$ be an $n$-dimensional torus.  Let $d$ be a nonnegative integer, and let $G_d$ be the Grassmannian of $d$-dimensional subspaces of $N_\Q$.

Say that a projection of $T$ is a split surjection $T \rightarrow T'$.  Since a projection is determined (up to canonical isomorphism) by its kernel, the set of projections of $T$ onto $(n-d)$-dimensional tori is naturally identified with $G_d(\Q)$.  The correspondence takes a projection to the subspace of $N_\Q$ spanned by the lattice of characters of its kernel.  We consider $G_d(\Q)$ with the classical topology induced by its inclusion in the manifold $G_d(\R)$, the Grassmannian of real $d$-dimensional subspaces of $N_\R$.

Recall that for a map of tori $\varphi: T \rightarrow T'$, we write $\phi: N_\R \rightarrow N'_\R$ for the induced map on vector spaces spanned by lattices of one-parameter subgroups.

\begin{proposition} \label{general projection}
Suppose $X \subset T$ is a subvariety of codimension $d+1$ and the tropical degeneration $X_0$ contains the identity $1_T$.  Then the set of projections $\varphi: T \rightarrow T'$ with the following three properties is dense in $G_d(\Q)$.
\begin{enumerate}
\item The closure of $\varphi(X)$ is a hypersurface in $T'$.
\item For all nonzero $v$ in the kernel of $\phi$, $X_v$ is empty.
\item The intersection of $X_0$ with the kernel of $\varphi$ is $\{1_T\}$.
\end{enumerate}
\end{proposition}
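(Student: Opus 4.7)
The plan is to show that, for each of the three properties, the subset of $G_d(\Q)$ where the property \emph{fails} is nowhere dense in the classical topology, so that the intersection of the three ``good'' sets is dense. In each case the bad $L$'s will be confined to a proper Zariski-closed subvariety of an appropriate Grassmannian via a direct dimension count; since $G_d(\Q)$ is classically dense in $G_d(\R)$, this suffices.

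For property (1), $\varphi|_X$ fails to be generically finite precisely when, at a smooth point $p$ on each irreducible component of $X$, the tangent space $T_p X \subset N_K$ meets the induced $d$-plane $L_K$ nontrivially. Since $\dim T_p X + \dim L = (n-d-1) + d = n-1 < n$, the set of $L$'s meeting a fixed $T_p X$ nontrivially is a Schubert subvariety of codimension one in $G_d(N_K)$; intersecting over the finitely many components of $X$ yields a Zariski-open condition.

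For property (2), the set $\Sigma := \{v \in N_\R : X_v \neq \emptyset\}$ is a finite rational polyhedral complex of dimension at most $n-d-1$, by the standard Gr\"obner-theoretic arguments on initial ideals (this does not require the surjectivity of tropicalization that we are ultimately proving). For each polyhedron $P \subset \Sigma$ with $0 \notin P$, an incidence-variety dimension count on $\{(L, x) : x \in L \cap P\} \subset G_d(\R) \times N_\R$ shows that the set of $L$ meeting $P$ has codimension at least one in $G_d(\R)$. For each polyhedron containing $0$, apply the analogous argument to the tangent cone at $0$, projectivized in $\P(N_\R)$. The union of the finitely many resulting bad subsets is therefore nowhere dense.

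For property (3), the tangent cone $C_{1_T} X_0 \subset N_k$ of the tropical degeneration at the identity has dimension $n-d-1$, so the condition $L_k \cap C_{1_T} X_0 = \{0\}$ cuts out a Zariski-open subset of $G_d(N_k)$ and guarantees that $X_0 \cap \ker \varphi$ is set-theoretically $\{1_T\}$ in a Zariski neighborhood of $1_T$. Extra intersection points away from $1_T$ are ruled out by a Bertini-type dimension count on the family $\{X_0 \cap \ker \varphi_L\}_L$: the expected dimension $\dim X_0 + \dim \ker \varphi_L - \dim T_k = -1$ is negative, so for generic $L$ the intersection away from $1_T$ is empty. The main obstacle I expect is the bookkeeping of combining these conditions, which are formulated over three different fields ($K$, $\R$, and $k$): one must verify in each case that the bad locus lies in a proper Zariski-closed subvariety of the relevant Grassmannian, so that upon restriction to $G_d(\Q) \subset G_d(\R)$ the classical density of $\Q$-points makes each good set dense, and hence so is their intersection.
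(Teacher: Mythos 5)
Your treatment of (2) is essentially the paper's (a dimension count against the finite rational polyhedral complex of $v$'s with $X_v\neq\emptyset$), but the arguments you propose for (1) and especially (3) have genuine gaps. For (3), the crucial point you miss is that the kernels $\ker\varphi_L$, as $L$ ranges over $G_d(\Q)$, do \emph{not} form an algebraic family over the Grassmannian: only rational subspaces give subtori, incidence of a fixed point of $T(k)$ with a subtorus is an arithmetic condition (multiplicative relations among its coordinates), and no Bertini-type ``expected dimension $=-1$'' count applies. Concretely, if $X_0$ contains a torsion point $x\neq 1_T$, say $x=(-1,-1)$ in $(k^*)^2$ with $d=1$, then every subtorus $\{(s^a,s^b)\}$ with $a,b$ odd contains $x$, so the bad locus for (3) is classically dense in $G_1(\R)$ and is certainly not contained in a proper Zariski-closed subvariety; your claim that the good set is open (and your tangent-cone argument, which only controls the intersection near $1_T$) therefore fails. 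This is exactly why the paper's proof asserts only density, ``though not open,'' and replaces the dimension count by an arithmetic argument: for each $x\in X_0\smallsetminus\{1_T\}$ the subtori through $x$ are those with $N_d^\perp\cap M$ contained in the proper sublattice $M_x=\ker(\ev_x)$, the complement of the finite union of the $M_x$ contains a translate of a full-rank sublattice, and an induction on $\dim X$ (intersecting with a general codimension-one subtorus) reduces to this zero-dimensional case.

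For (1) there is a second, characteristic-$p$ gap in your Schubert-variety argument. The paper works over an arbitrary algebraically closed nonarchimedean field, e.g.\ $k((t^G))$ with $\operatorname{char} k=p>0$. In that case the map from rational $d$-planes $L\subset N_\Q$ to their induced $K$-planes $L_K\subset N_K$ factors through the \emph{finite} set $G_d(\F_p)$ (the lines spanned by $(1,m)$ and $(1,m+p)$ induce the same tangent direction), so knowing that the bad locus $\{L_K: L_K\cap T_pX\neq 0\}$ is a proper closed subset of $G_d(N_K)$ does not produce even one good rational projection, let alone a dense set; the same objection applies to any linear-algebra-over-$k$ condition in your (3). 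The paper avoids tangent spaces entirely and deduces (1) combinatorially: a general rational $d$-plane projects the pure $(n-d-1)$-dimensional complex $S(X)$ onto a codimension-one complex in $N'_\R$, and since $\phi(S(X))\subseteq S(\overline{\varphi(X)})$ (Lemma~\ref{functoriality of SS}) while $S(\overline{\varphi(X)})$ has pure dimension $\dim\overline{\varphi(X)}$, the image must be a hypersurface. So your outline would need to be repaired both by a characteristic-free argument for (1) and by an entirely different (arithmetic, non-open) argument for (3).
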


\noindent In the proof of the proposition we will use the following basic lemma.  Let $X' \subset T'$ be the closure of $\varphi(X)$.

\begin{lemma} \label{functoriality of SS}
For any $v \in N_\R$, $\varphi_v(X_v)$ is contained in $X'_{\phi(v)}$.
\end{lemma}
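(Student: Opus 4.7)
The plan is to reduce the lemma to a compatibility statement between the pullback $\varphi^*$ of Laurent polynomials and the initial-form operation $\init_v$, and then chase a commutative diagram. Concretely, I want to check that the square
\[
\begin{array}{ccc}
R[M']^{\phi(v)} & \xrightarrow{\ \varphi^*\ } & R[M]^v \\
\downarrow \init_{\phi(v)} & & \downarrow \init_v \\
k[T'_{\phi(v)}] & \xrightarrow{\ \varphi_v^*\ } & k[T_v]
\end{array}
\]
commutes, where the top horizontal arrow is the map already introduced before the statement of the lemma. This is a calculation on monomials: a term $a\,x^{u'}$ with $\nu(a) \geq \langle u', \phi(v)\rangle = \langle \varphi^*(u'), v\rangle$ maps, either way around, to $\bar{a}\,x^{\varphi^*(u')}$ in $k[T_v]$, where $\bar a$ is the image of $a$ in the graded piece $k^{\langle u',\phi(v)\rangle} = k^{\langle \varphi^*(u'),v\rangle}$.

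Given this commutativity, the proof is short. Let $f' \in R[M']^{\phi(v)} \cap I(X')$ be any element whose initial form contributes to cutting out $X'_{\phi(v)}$. Because $\varphi$ sends $X$ into $X'$, the pullback $\varphi^*(f')$ lies in $I(X)$; by the preliminary remark on tilted group rings, it also lies in $R[M]^v$. Hence $\varphi^*(f') \in R[M]^v \cap I(X)$, so by definition $\init_v(\varphi^*(f'))$ vanishes on $X_v$. The commutative square identifies this element with $\varphi_v^*(\init_{\phi(v)}(f'))$, so $\init_{\phi(v)}(f')$ vanishes along $\varphi_v(X_v)$. Since this holds for every such $f'$, and such initial forms generate the ideal defining $X'_{\phi(v)}$ in $T'_{\phi(v)}$, we conclude $\varphi_v(X_v) \subset X'_{\phi(v)}$.

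The only step that requires genuine care is the commutativity of the square, and in particular checking that $\init_v$ and $\init_{\phi(v)}$ are compatible with the various graded pieces $k^s = \m^s/\m^s_+$ under $\varphi^*$. The key observation is that $\varphi^*$ preserves coefficients and only changes the monomial exponent from $u'$ to $\varphi^*(u')$, and the equality of weights $\langle u', \phi(v)\rangle = \langle \varphi^*(u'), v\rangle$ is exactly the definition of the dual map $\phi$. Everything else is formal: no Gr\"obner-theoretic input, no hypothesis on $\varphi$ being dominant or surjective, and no use of the special structure of $X'$ as the closure of $\varphi(X)$ beyond the inclusion $\varphi^*(I(X')) \subset I(X)$.
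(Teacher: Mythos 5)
Your argument is correct and follows essentially the same route as the paper: the paper's proof consists precisely of the identity $\varphi_v^*\,\init_{\phi(v)}(f) = \init_v(\varphi^*(f))$ for $f$ in the ideal of $X'$ (which you verify by the monomial computation), combined with the observation that $\varphi^*f$ vanishes on $X$ so its initial form vanishes on $X_v$. Your write-up just makes explicit the weight compatibility $\langle u',\phi(v)\rangle = \langle \varphi^*(u'), v\rangle$ that the paper leaves implicit.
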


\noindent In particular, if $X_v$ is nonempty then $X'_{\phi(v)}$ is nonempty.

\begin{proof}
Suppose $f$ is in the ideal of $X'$.  Then $\varphi_v^* \init_{\phi(v)}(f)$ is equal to $\init_v (\varphi^*(f))$, and $\varphi^* f$ vanishes on $X$.  Hence $\varphi^* \init_{\phi(v)}(f)$ vanishes on $X_v$, as required.
\end{proof}

\begin{proof}[Proof of Proposition \ref{general projection}]
Let $S(X) \subset N_\R$ be the set of points $v$ such that $X_v$ is not empty.  Then $S(X)$ is the underlying set of a finite polyhedral complex of pure dimension equal to the dimension of $X$ \cite[Theorem~9.6]{Sturmfels02}.  See also \cite[Proposition~2.4.5]{SpeyerThesis}.\footnote{The polyhedral structure and pure dimensionality of $S(X)$ is closely related to the Bieri-Groves Theorem that the set of characters induced by valuations on the function field $K(X)$ that extend the valuation $\nu$ gives a polyhedral complex in $N_\R$ of pure dimension $d$ \cite{BieriGroves84}, a special case of which proved Bergman's conjecture on logarithmic limit sets of varieties \cite{Bergman71}.}  Then projection along a general $d$-dimensional subspace of $N_\Q$ maps $S(X)$ onto the underlying set of a finite polyhedral complex of pure codimension one.    By Lemma~\ref{functoriality of SS}, $\phi(S(X))$ is contained in $S(\overline{\varphi(X)})$, and it follows that if $\phi(S(X))$ has codimension one then $\overline{\varphi(X)}$ must be a hypersurface.  This shows that (1) and (2) hold for a dense open subset of $G_d(\Q)$.

We now prove that (3) holds on a dense subset of $G_d(\Q)$ by induction on the dimension of $X$.  Suppose $X$ is zero-dimensional.  Then $X_0$ has dimension zero, and we must show that a general codimension one subtorus misses the zero-dimensional subscheme $X_0 \smallsetminus 1_T$.  Let $x \neq 1_T$ be a point in $X_0$.  Then the kernel of the evaluation map $\ev_x : M \rightarrow K^*$ is a proper sublattice
\[
M_x \subset M,
\]
and a subtorus $T_d \subset T$ contains $x$ if and only $M_x$ contains $N_d^\perp \cap M$, where $N_d$ is the character lattice of $T_d$.  Now the complement of any finite union of proper sublattices contains a translate of a sublattice of full rank.  In particular, there is a translate of a sublattice of full rank $\Lambda \subset M$ that is disjoint from $M_x$ for every $x$ in $X_0 \smallsetminus 1_T$.  If $\ell$ is the line through a point in $\Lambda$, then $\ell^\perp$ corresponds to a codimension one subtorus disjoint from $X_0 \smallsetminus 1_T$.  Since $\Lambda$ is a translate of a sublattice of full rank, the set of all such $\ell^\perp$ is dense (though not open) in $G_d(\Q)$.

Now, suppose $X$ is positive dimensional.  For any codimension one torus $T_1 \subset T$ not containing $X$, $T_1 \cap X$ has pure codimension $d$ in $T_1$, and hence dimension strictly smaller than $X$.  Then, by induction on dimension, there is a dense set of $d$-dimensional subtori $T_0 \subset T_1$ such that the projection $T_1 \rightarrow T_1/ T_0$ satisfies (3) for $T_1 \cap X$.  If $T_0$ is such a torus, then the projection $T \rightarrow T/T_0$ also satisfies (3).

Therefore, the set of projections satisfying all three conditions is the intersection of an open dense subset of $G_d(\Q)$ where (1) and (2) are satisfied with a dense set where (3) is satisfied, and hence is dense.
\end{proof}

\section{Proof of main result}

Let $\underline x$ be a point in $X_v(k)$, for some $v \in N_G$.  We must show that $\TTrop^{-1}(\underline x) \cap X(K)$ is Zariski dense in $X(K)$.  After translating by a point in $\Trop^{-1}(-v)$, we may assume that $v = 0$ and $\underline x = 1_T$.

\subsection{Reduction to the hypersurface case} \label{reduction to hypersurface}

Choose a projection $\varphi: T \rightarrow T'$ as in Proposition \ref{general projection}.  Let $X' \subset T'$ be the closure of the image of $X$.  By Lemma~\ref{functoriality of SS}, $1_{T'}$ is in $X'_0$.

Suppose $\TTrop^{-1}(1_{T'})$ is dense in $X'(K)$.  By the choice of the projection, any point in $X(K)$ that maps into $\TTrop^{-1}(1_{T'})$ must lie in $\TTrop^{-1}(1_T)$.  Now $\varphi(X)$ contains an open dense subvariety of $X'$, so it follows that the image of $\TTrop^{-1}(1_T)$ is dense in $X'(K)$.  Since $X$ and $X'$ have the same dimension, $\TTrop^{-1}(1_T)$ must be dense in $X(K)$.

Therefore, we may assume that $X$ is a hypersurface.

\subsection{Tropicalization of hypersurfaces} \label{tropicalization of hypersurfaces}

Suppose $X \subset T$ is a hypersurface.  Let $f \in K[M]$ be a defining equation for $X$.

\begin{proposition} \label{hypersurface}
If $X \subset T$ is a positive dimensional hypersurface with $\underline x \in X_0$ then $\TTrop^{-1}( \underline x)$ is infinite.
\end{proposition}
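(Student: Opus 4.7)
The plan is to reduce the problem to a one-variable Newton-polygon lifting by fixing $n-1$ coordinates at arbitrary lifts of $\underline x_2, \ldots, \underline x_n$ and solving for $x_1$. The main work will be to arrange coordinates on $T$ so that the resulting one-variable equation is not identically zero modulo $\m$.

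First I would scale the defining Laurent polynomial $f \in K[M]$ by an element of $K^*$ so as to arrange $f \in R[M]$ with $\init_0(f) \neq 0$ in $k[M]$; then Lemma~\ref{reduction lemma} gives $\init_0(f)(\underline x) = 0$. Since $\underline x$ lies in $T(k) = (k^*)^n$, $\init_0(f)$ cannot be a single monomial, so its support $A \subset M$ has $|A| \geq 2$. I would then choose a primitive linear form $\ell \in N = \Hom(M,\Z)$ whose restriction to $A$ is injective; this is possible because the forms identifying any fixed pair of elements of $A$ constitute a proper sublattice of $N$, and a finite union of proper sublattices cannot cover $N$. Extending $\ell$ to a basis of $N$ gives coordinates $(x_1,\ldots,x_n)$ on $T$ in which the distinct monomials of $\init_0(f)$ have pairwise distinct $x_1$-exponents.

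Next, for any $y_2,\ldots,y_n \in R^*$ with residues $\underline x_2,\ldots,\underline x_n$ (a set of infinite cardinality, since $n \geq 2$ because $X$ is positive-dimensional), I would set $g(x_1) := f(x_1,y_2,\ldots,y_n) \in R[x_1^\pm]$. Its reduction $\bar g(x_1) \in k[x_1^\pm]$ equals $\init_0(f)(x_1, \underline x_2, \ldots, \underline x_n)$, which by construction has $|A|$ nonzero monomials of pairwise distinct $x_1$-degrees and vanishes at $\underline x_1$. A standard Newton-polygon argument for polynomials over an algebraically closed nonarchimedean field then produces a root $y_1 \in R^*$ of $g$ with $\underline y_1 = \underline x_1$. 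The point $(y_1,\ldots,y_n) \in X(K)$ lies in $\TTrop^{-1}(\underline x)$, and varying $(y_2,\ldots,y_n)$ over its infinite range yields infinitely many distinct such points.

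The main obstacle is the coordinate-change step. Without separating the support of $\init_0(f)$ by a single coordinate function, cancellations among terms sharing an $x_1$-degree could make $\bar g$ identically zero in $x_1$, depriving the Newton-polygon step of any slope-zero input. The argument is elementary linear algebra on $M$, but it is precisely what converts the positive-dimensional hypersurface statement into a one-variable lifting problem solvable by classical techniques.
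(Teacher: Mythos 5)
Your proposal is correct, and its overall strategy is the same as the paper's: specialize all but one coordinate to lifts in $R^*$ of the residues, solve the resulting one-variable equation for a root in $R^*$ with residue $\underline x_1$ (by factoring over the algebraically closed field $K$, equivalently a Newton-polygon argument, equivalently Kedlaya's Lemma from the appendix at $v=0$), and get infinitely many points from the infinitely many choices of lifts. The genuine difference is your preliminary change of coordinates, and it is not a redundancy: the paper chooses the lifts $x_2,\ldots,x_n$ arbitrarily and asserts that $\underline x_1$ is a root of $\init_0(g)$ for $g(y)=f(y,x_2,\ldots,x_n)$, implicitly assuming $\init_0(g)\neq 0$; this can fail through exactly the cancellation you identify. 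For instance, if $f=y_1y_2-y_1-c$ with $\nu(c)>0$ and $\underline x=(1,1)$, the lift $x_2=1$ gives $g=-c$, which has no roots at all, so the argument as written produces nothing for that lift. Your choice of a primitive $\ell\in N$ injective on the support of $\init_0(f)$, extended to a basis, forces the monomials of $\init_0(f)$ to have pairwise distinct $x_1$-exponents, so substituting elements of $k^*$ cannot cancel them; hence $\bar g$ is a nonzero Laurent polynomial with at least two terms vanishing at $\underline x_1\in k^*$, and the lifting step then works for every choice of lifts. So your version buys a complete argument at the modest cost of an elementary lattice lemma, whereas the paper's shorter version needs precisely this repair. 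Two small points of precision: the vanishing $\init_0(f)(\underline x)=0$ follows directly from the definition of $X_0$ (Lemma~\ref{reduction lemma} goes in the opposite direction, from points of $X(K)$ to their residues); and in choosing $\ell$, the relevant subgroups $\{\ell\in N \mid \langle u-u',\ell\rangle=0\}$ have rank $n-1$, which is what makes the covering argument valid --- ``finitely many proper sublattices cannot cover $N$'' is false if finite-index sublattices are allowed, so you should say corank-positive (or infinite-index) subgroups. Finally, rather than invoking a ``standard'' Newton-polygon fact, you could simply quote the paper's own factorization-into-linear-factors step or the appendix lemma, since that is exactly the statement needed.
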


\begin{proof}
We prove the proposition by explicitly producing solutions to $f$ with the desired tropicalization using Newton's method, as in Kedlaya's proof that fields of generalized power series over an algebraically closed field are algebraically closed \cite{Kedlaya01}.

Choose a splitting $M \xrightarrow{\sim} \Z^n$, inducing an identification
\[
K[M] \cong K[ y_1 , y_1^{-1}, \ldots, y_n, y_n^{-1}].
\]
After multiplying $f$ by a monomial $a x^u$ with $a \in K^*$ and $u \in M$, we may assume that $f \in R[y_1, y_2, \ldots, y_n]$ is a polynomial and that $\init_0(f)$ is nonzero.

Let $\underline x = (\underline x_1, \ldots, \underline x_n)$ be a solution of $\init_0(f)$.  Choose lifts $x_2, \ldots, x_n$ in $R^*$, with residues $\underline x_2, \ldots, \underline x_n$, respectively.  Let $g$ be the polynomial $g(y) = f (y, x_2, \ldots, x_n)$.
Since $K$ is algebraically closed, $g$ factors as a product of linear terms
\[
g(y) = y^b (c_1 y - d_1) \cdots (c_r y - d_r),
\]
which can be normalized so that each $c_i$ and $d_i$ are in $R$.  Now, initial forms are multiplicative, so $\init_0(g) = y^b \init_0(c_1y - d_1) \cdots \, \init_0(c_r y - d_r)$.  By construction, $\underline x_1$ is a solution of $\init_0(g)$.  Therefore, there is a solution $y = d_i / c_i$ of $g$ in $R^*$ with residue $\underline x_1$.  In particular, since there are infinitely many distinct lifts of $\underline x_2$, this produces infinitely many points in $\TTrop^{-1}(\underline x) \cap X(K)$, as required.
\end{proof}

\begin{example}
If $K = k((t^\R))$ then solutions to $g$ in the proof of Proposition~\ref{hypersurface} with valuation zero and the desired residue $\underline x_1$ can be constructed by Kedlaya's transfinite version of Newton's method.  See \cite[Section~2]{Kedlaya01}, or the Appendix below.  This approach is closely related to the algorithmic arguments developed independently by Jensen, Markwig, and Markwig \cite{JMM}.
\end{example}

\subsection{Conclusion of proof}

\begin{proof}[Proof of Theorem \ref{main}]  
We may assume that $X$ is a hypersurface and $\underline x$ is in $X_0(k)$.  Let $y \in X(K)$ be a general point.  We must show that $y$ is in the Zariski closure of $\TTrop^{-1}(\underline x)$.  By Proposition~\ref{hypersurface}, $\TTrop^{-1}(\underline x) \cap X(K)$ is nonempty, so we may choose a point $x \in \TTrop^{-1}(\underline x)$.  Furthermore, since $X$ is affine, we can find a curve $C \subset X$ containing both $x$ and $y$.  Therefore, it will suffice to show that $y$ is in the Zariski closure of $\TTrop^{-1}(\underline x) \cap C(K)$.  In other words, it suffices to prove Theorem~\ref{main} for an arbitrary curve $C$.

Once again, by projecting as in Proposition~\ref{general projection}, we may assume that $C$ is a curve in a two-dimensional torus.  Then Proposition~\ref{hypersurface} says that $\TTrop^{-1}(\underline x) \cap C(K)$ is infinite, and hence Zariski dense, as required.
\end{proof}

\section{Appendix: Kedlaya's transfinite version of Newton's Algorithm}

Here we give a strong lifting property for solutions of polynomials over generalized power series.  This lifting property appears implicitly in Kedlaya's proof that such fields are algebraically closed if the residue field is algebraically closed.  For the reader's convenience, we sketch a ``tropical" proof, adapted from Kedlaya's original presentation of the transfinite version of Newton's algorithm in terms of slopes of faces of Newton polytopes \cite[Section~2]{Kedlaya01}.  Here, $K = k((t^G))$ is an algebraically closed field of generalized power series. 

\begin{KL}
 Let $f \in K[z,z^{-1}]$ be a Laurent polynomial in a single variable, and let $a \in k^*$ be a solution of $\init_v (f)$ with multiplicity $m$.  Then there are exactly $m$ solutions $f(x) = 0$ in $K^*$ such that $\nu(x) = v$ and $\underline x = a$, counted with multiplicity.
\end{KL}

\begin{proof}
After multiplying $f$ by a power of $z$, we may assume that 
\[
f = c_0 + c_1 z + \cdots + c_d z^d
\]
is a polynomial with nonzero constant term.  By induction on the degree of $f$, we may assume that $v$ is the largest real number such that $\init_v(f)$ is not a monomial, and replacing $z$ by some $z + c$, we may assume that $v = 0$.  Furthermore, after multiplying both $f$ and $z$ by suitable powers of $t$, we may assume that $\nu(c_i)$ is nonnegative for all $i$, and $\nu(c_0) = 0$.

We inductively construct a transfinite sequence of approximations $x_\omega \in K^*$ converging to the required solution of $f$, as follows.  Set
\[
x_1 = a, \mbox{ and } y_1 = f(x_1).
\]
Since $x_1$ is a solution of $\init_0(f)$, $\nu(y_1)$ is strictly positive.  Now, for any $h \in K$, let $\Delta(h)$ be $f(z+h) - f(z)$.  Then $\Delta$ is a polynomial of degree $d$ with no constant term, so it may be written as
\[
\Delta(h) = c'_1h + \cdots + c'_d h^d,
\]
and each nonzero $c'_i$ has $\nu(c'_i) \geq 0$.  Let $\Psi$ be the ``tropicalization" of $\Delta$, which is the convex piecewise linear function on $\R$ given by
\[
\Psi(r) = \min \{ \nu(c'_i) + j \cdot r \ | \ c'_i \neq 0 \}.
\]
Then $\Psi$ is strictly increasing, so there is a unique $v_1$ such that $\Psi(v_1) = \nu(y_1)$.  The coefficient of $t^{\nu(y_1)}$ in $\Delta(a t^{v_1})$ is a nonzero polynomial in $a$ with no constant term, so it takes on every value in $k^*$, and when this coefficient is nonzero then the valuation of $\Delta(a t^{v_1})$ is $\nu(y_1)$.  In particular, we can choose $a_1$ such that the leading term of $\Delta(a_1t^{v_1})$ is $-\underline y_1 t^{\nu(y_1)}.$

Therefore, if we set 
\[
x_2 = a + a_1t^{v_1}
\]
then either $f(x_2) = 0$ or $\nu(f(x_2))$ is strictly greater than $v_1$.  We set $y_2 = f(x_2)$ and continue to produce successive approximations similarly, by transfinite induction, as follows.

For any ordinal $\omega$, suppose we have coefficients $a_j \in k^*$ and a strictly increasing sequence of exponents $v_j \in \R^*$, for $j < \omega$ with the property that if we set
\[
x_{j} = a + \sum_{j' < j} a_{j'} t^{v_{j'}} \mbox{ and } y_j = f(x_j),
\]
then $\nu (y_j)$ is greater than $\nu(y_{j'})$ for all $j' < j < \omega$.  Now, let $x_\omega = a + \sum_{j < \omega} a_j x^{v_j}$.  Then, it is clear that either $f(x_\omega) = 0$ or $\nu(f(x_\omega))$ is greater than $v_j$ for all $j < \omega$.  If $f(x_\omega)$ is not zero, then we set $y_\omega = f(x_\omega)$ and continue as in the case $\omega = 1$ above, choosing $v_\omega$ such that $\Psi(v_\omega) = \nu(y_\omega)$  and choosing $a_\omega$ such that the leading term of $\Delta(a_\omega t^{v_\omega})$ is $- \underline y_\omega t^{\nu(y_\omega)}$.  Then either $x_{\omega + 1} = x_\omega + a_\omega t^{v_\omega}$ is a solution for $f$ or $f(x_{\omega + 1})$ has valuation strictly greater than $\nu(y_\omega)$.

Since the increasing sequence of real exponents  $v_j$ is well-ordered, this sequence must be at most countable.  Therefore, the transfinite Newton's algorithm must terminate, producing a solution $f(x_\omega) = 0$, for some $\omega$ less than or equal to the first uncountable ordinal, with leading term $\underline x_\omega = a$.  That the number of such solutions is equal to $m$ is clear if $d = 1$, and follows in general by induction on degree.
\end{proof}

\vspace{5 pt}

\noindent \textbf{Acknowldegments}  While learning about the rapidly developing subject of tropical geometry I have benefited from many helpful discussions with friends and colleagues including W.~Fulton, P.~Hacking, E.~Katz, H.~Markwig, G.~Mikhalkin, E.~Shustin, J.~Tevelev, R.~Vakil, and J.~Yu.  The problem of understanding fibers of tropicalization was discussed at a one-day workshop at MSRI, and I thank the organizers B.~Sturmfels and E.~Feichtner, as well as the other participants, for their shared insights and encouragement. I am especially grateful to D.~Speyer and K.~Kedlaya for useful comments and corrections on an earlier draft of this note, B.~Osserman, from whom I first learned about the problem of surjectivity of tropicalization, and D.~Savitt, who brought Kedlaya's paper \cite{Kedlaya01} to my attention.  My thanks also to the referee for several corrections and helpful suggestions.  This work was supported by the Clay Mathematics Institute.

\bibliography{math}
\bibliographystyle{amsalpha}

\end{document}